\definecolor{blue}{HTML}{1F77B4}
\definecolor{orange}{HTML}{FF7F0E}
\definecolor{green}{HTML}{2CA02C}
\pgfplotsset{compat=1.14}
\newcommand{\g}{{\, |\,}}
\newcommand{\Pp}{{\mathbb P}}
\newcommand{\wt}{{\widetilde t}}
\newcommand{\kauno}{{\mathcal L}}
\newcommand{\Nn}{{\mathcal N}}
\newcommand{\Tt}{{\Gamma}}
\newcommand{\oT}{\boldsymbol{T}}
\newcommand{\N}{\mathbb{N}}
\newcommand{\R}{\mathbb{R}}
\newcommand{\Z}{\mathbb{Z}}
\DeclareMathOperator*{\argmax}{argmax}
\DeclareMathOperator*{\argmin}{argmin}
\newtheorem{definition}{Definition}
\newtheorem{proposition}{Proposition}
\newtheorem{remark}{Remark}
\begin{document}
\title{Random tree Besov priors: Data-driven regularisation parameter selection}


\author{%
Hanne Kekkonen\thanks{Email: \texttt{h.n.kekkonen@tudelft.nl}}\hspace{1mm} and Andreas Tataris\\[0.6em]
\small{Delft University of Technology}\\
\small{Delft Institute of Applied Mathematics}\\
\small{Delft, The Netherlands}
}

\maketitle              
%
%
\begin{abstract}

We develop a data-driven algorithm for automatically selecting the regularisation parameter in Bayesian inversion under random tree Besov priors.
One of the key challenges in Bayesian inversion is the construction of priors that are both expressive and computationally feasible. Random tree Besov priors, introduced in \cite{Kekkonen2023}, provide a flexible framework for capturing local regularity properties and sparsity patterns in a wavelet basis. In this paper, we extend this approach by introducing a hierarchical model that enables data-driven selection of the wavelet density parameter, allowing the regularisation strength to adapt across scales while retaining computational efficiency. We focus on nonparametric regression and also present preliminary plug-and-play results for a deconvolution problem.\\

\noindent \textit{Keywords:} Bayesian inverse problems, statistical inversion, hierarchical Bayes, Besov priors, wavelets, multiscale sparsity.

\end{abstract}
\section{Introduction}

Inverse problems arise from the need to estimate unknown quantities from indirect, often incomplete, noisy measurements. They are encountered in various applications, ranging from medical imaging and remote sensing to geophysical exploration and financial modelling. Inverse problems are typically ill-posed, meaning that small perturbations in data or inaccuracies in the model can result in large errors or instabilities in the reconstructions. To obtain stable reconstructions, we must carefully combine prior knowledge about the unknown with the measurements.

In practice, measurements are always finite-dimensional and contaminated by noise. To avoid undesirable artefacts and potential changes in the qualitative behaviour of reconstructions due to discretisation, it is advisable to construct a continuous model that is discretised as late as possible in the analysis and solution process \cite{Haario2004, Lassas2004, Stuart2010}. This leads to a continuous model of the form
\begin{equation*}\label{eq:IP_continuous}
M = Af + \varepsilon \mathbb{W},
\end{equation*}
where $A:X\to Y$ is a linear operator between Banach spaces $X$ and $Y$, $\varepsilon>0$ describes the noise amplitude, and $\mathbb{W}$ is a Gaussian white noise process indexed by $Y$ \cite{Reiss2008}. 

Bayesian inversion provides a powerful framework for addressing inverse problems by combining the indirect, noisy data with prior information in a probabilistic manner. This offers an attractive alternative to regularisation by also delivering information on how uncertainties in the data or forward model affect the solution.
The choice of the prior distribution, which encodes assumptions about the unknown quantity, is central to the Bayesian approach. While Gaussian priors offer computational efficiency and theoretical simplicity, they impose global smoothness constraints, making them less effective for problems such as image and signal reconstruction, where capturing sharp edges and interfaces is essential.

Recently, Besov priors have emerged as a compelling alternative, bridging the gap between rigorous mathematical formulation and practical performance \cite{Lassas2009, Dashti2012}. Besov priors naturally capture local smoothness properties while allowing discontinuities, making them well-suited for problems involving spatially varying regularity \cite{Bui2015, Cui2021, Hamalainen2013, Horst2025, Kolehmainen2012, Rantala2006, Suuronen2020, Wang2017}. 

However, a limitation of traditional Besov priors constructed via the Karhunen-- Loève expansion is their inability to explicitly encode complex covariance structures. To overcome this limitation, random tree Besov priors were introduced in \cite{Kekkonen2023}. Incorporating a hierarchical tree structure into the wavelet representation allows for more flexible modelling of local variations and rapid spatial changes. Although this approach does not directly encode a covariance structure comparable to Gaussian processes, it significantly enhances flexibility compared to classical Besov priors. The random tree Besov priors have been applied, for example, to elliptic PDEs \cite{Schwab2024, Stein2023}. 

Random tree Besov priors are conceptually related to other tree-structured Bayesian models. In finite-dimensional signal and image processing, wavelet-domain hidden Markov tree models \cite{Crouse2002,Romberg2001} represent wavelet coefficients via Gaussian mixtures with hidden states forming a Markov tree, capturing clustering of large and small coefficients and cross-scale persistence. In nonparametric density estimation tree-based priors such as Pólya trees\cite{Castillo2017, Castillo2021, Castillo2022}, as well as Bayesian CART-type priors with multiscale shrinkage \cite{Castillo2019}, also exploit hierarchical tree structures. In contrast, random tree Besov priors are constructed directly as Besov function-space priors in a wavelet basis and are tailored for Bayesian inverse problems.

The construction proposed in \cite{Kekkonen2023} leveraged wavelet decompositions, modelling $f$ as a random function whose wavelet coefficients are supported on structured trees. This tree-pruning approach naturally promotes sparsity and local adaptivity by enforcing hierarchical constraints for the wavelet coefficients. The hierarchical tree structure ensures that a wavelet coefficient is selected only if all of its ancestors in the decomposition are also selected. This condition is motivated by the observation that true discontinuities in a signal or image produce large wavelet coefficients consistently across multiple scales, from coarser to finer resolutions. In contrast, noise-induced coefficients typically appear large only at the finest scales and rapidly diminish at coarser levels due to the inherent averaging effect of wavelets.

In \cite{Kekkonen2023} the wavelet density index $\beta$ that controls the sparsity of the wavelet coefficients was selected manually. Although this method yielded promising results, the optimal choice of $\beta$ remained challenging and data-dependent. Furthermore, employing a single fixed regularisation parameter across all wavelet decomposition levels is overly restrictive, since wavelet coefficients typically exhibit varying degrees of sparsity at different resolutions: fine scales often have sparser representations than coarser scales \cite{Adcock2017}.

In this paper, we address these limitations by presenting an automatic data-driven method for selecting the wavelet density index $\beta$. We introduce a hierarchical Bayesian model in which the wavelet density parameter is allowed to vary across decomposition levels, resulting in a vector-valued parameter $\boldsymbol{\beta}= (\beta_j)_j$. We place a suitable hyperprior on $\boldsymbol{\beta}$ and propose a computationally efficient method for estimating the optimal $\boldsymbol{\beta}$ directly from data by considering the levelwise maximum a posteriori (MAP) estimates. This strategy allows the regularisation strength to adapt flexibly to the scale-specific sparsity inherent in wavelet decompositions and removes manual tuning of $\beta$. We introduce an efficient recursive algorithm for computing the MAP estimates of the wavelet coefficients, tree structure, and regularisation parameters simultaneously for both Gaussian and Laplace base prior distributions on the wavelet coefficients. We provide numerical experiments demonstrating that our automatically chosen $\boldsymbol{\beta}$ performs at least as well as the optimally chosen fixed $\beta$. 

The remainder of this paper is organised as follows. In Section \ref{Sec:Preliminaries}, we review relevant concepts for Besov spaces and wavelet-based random variables, and define the random tree Besov priors in detail. Section \ref{Sec:BetaSelection} describes our proposed Bayesian inference method for automatically selecting level-dependent regularisation parameters $\boldsymbol{\beta}$. Numerical examples illustrating the performance and practical advantages of our approach are provided in Section \ref{Sec:Numerical}.

\section{Random tree Besov priors\label{Sec:Preliminaries}}

We briefly recall the essential theory of Besov spaces and the construction of Besov measures in the next subsection, before moving to the hierarchical random tree structure.

\subsection{Besov random variables}\label{Sec:BesovMeasures}
In this paper we use the wavelet representation of Besov norms. 
Let $\phi$ and $\Psi^\ell$, $\ell=1,\ldots ,2^d-1$, be compactly supported scaling and wavelet functions suitable for multi-resolution analysis of smoothness $C^r$ with some $r\in\N_+$. We denote
\begin{align*}
\psi^\ell_{jk}(x)&=2^{jd/2}\Psi^\ell(2^jx-k),\qquad k\in\Z^d,\;  
j\geq 0,  \; \ell\in\kauno:=\{1,\ldots, 2^d-1\},\\
\phi_{k}(x)&= \phi(x-k), \qquad k\in \Z^d.
\end{align*}
Any function $f\in L^2(\R^d)$ can then be decomposed in terms of the wavelet basis
\begin{align*}
f(x)&=\sum_{k\in\Z^d}\langle f, \phi_k\rangle\phi_k(x)
+\sum_{\substack{j\geq 0,\ k\in \Z^d\\ \ell\in\kauno}}\langle f, \psi^\ell_{ jk}\rangle\psi^\ell_{ jk}(x)\\
&=\sum_{k\in\Z^d}f_{-1k}\phi_k(x)+\sum_{\substack{j\geq 0,\ k\in \Z^d
\\ \ell\in\kauno}}f^\ell_{ jk}\psi^\ell_{ jk}(x).
\end{align*}

The Besov space $B_{pp}^s(\R^d)$ is defined as
\begin{align*}
    B_{pp}^s(\R^d):= \{f\in L^2(\R^d): \|f\|_{B^s_{pp}(\R^d)}<\infty \},
\end{align*}
with the norm
\begin{align*}
\|f\|_{B^s_{pp}(\R^d)}
= \left(\sum_{{j=-1}}^\infty 2^{jp\big(s+\frac d2-\frac dp\big)}
\|\textbf{f}_j\|_{\ell^p}^p\right)^{1/p},
\end{align*}
where
\begin{align*}
\textbf{f}_j&=(f^\ell_{ jk})_{k\in \Z^d, \; \ell\in\kauno}\quad \text{with}\quad
\|\textbf{f}_j\|_{\ell^p}=\left(\sum_{k\in \Z^d, \; \ell\in\kauno}
|f^\ell_{ jk}|^p\right)^{1/p}  \qquad \textrm {if}\;\, j\geq 0,\quad \textrm{and}\\
\textbf{f}_{-1}&=(f_{-1k})_{k\in \Z^d} \quad \text{with}\quad
\|\textbf{f}_{-1}\|_{\ell^p}=\left(\sum_{k\in \Z^d}|f_{-1k}|^p\right)^{1/p}.
\end{align*}
For more details on Besov spaces we refer to 
\cite{Adams1975,Gine2015,Tartar2007}.

We note that Besov spaces are suitable for representing functions with local smoothness variations, including sharp edges. For example, the Besov space $B^1_{11}$ is close to the space of total variations in the sense that 
\begin{align*}
    B^{1}_{11}(\R^d)\subset W^{1,1}_{loc}(\R^d)\subset B^{1-\varepsilon}_{11}(\R^d), \qquad \textrm{for all}\quad \varepsilon >0.
\end{align*}

For simplicity, we ignore boundary effects below by considering functions with support in the interior of the unit cube $D=[0,1]^d$. 
We could also adjust the definition of the Besov priors for different boundary conditions, and study the behaviour  near the boundary, but since this is application-specific we will not consider it in this paper.

Let $(X^\ell_{jk})_{\ell\in\kauno,\, j\in\N,\,k\in\Z^d}$ be an i.i.d. sequence of real random variables with probability density function $\pi(x)\propto\exp (-\frac{1}{2\kappa^p}|x|^{p})$, $1\leq p<\infty$, denoted by $X^\ell_{jk}\sim \Nn_p(0,\kappa^p)$.
Define the random function
\begin{align}\label{eq:BesovPrior}
f(x)=\sum_{\substack{\ell\in\kauno,\, j\in\N\\ k\in\Z^d}}
2^{-j\big(s+\frac d2-\frac dp\big)} X^\ell_{ j,k}\psi^\ell_{j,k}(x),\quad x\in D,
\end{align}
with $s<r$.  We call $f$ a $B^{s}_{pp}$-random variable.

These random variables take values almost surely in Besov spaces $B^{t}_{pp}$ with $t<s-\frac{d}{p}$, while a realisation $f$ belongs to $B^{s-d/p}_{pp}$ with probability zero \cite{Dashti2012}. Informally, $f$ has density proportional to $\exp(-\frac{1}{2\kappa^p}\|f\|_{B^s_{pp}}^p)$, see \cite{Dashti2012,Lassas2009}. 

If $p=2$, \eqref{eq:BesovPrior} defines a Gaussian measure with Cameron-Martin space $B^s_{22}=H^s$. For Gaussian measures, the Cameron–Martin space characterises admissible translations under which the measure remains equivalent, and governs small-ball probabilities, concentration, and posterior contraction. In contrast, Besov priors require a more delicate analysis. For $p\in[1,2]$, the Hilbert space $B^s_{22}$ describes admissible translations, while the Banach space $B^s_{pp}$ determines concentration and contraction rates \cite{Agapiou2021}. 

Since the Besov priors are constructed using a Karhunen--Lo\`eve expansion they  lack a natural covariance structure, limiting control over local correlation properties. To overcome this shortcoming random tree Besov priors introduce hierarchical wavelet coefficient selection to enhance flexibility in representing rapid local variations, mimicking the capabilities typically associated with covariance structures in Gaussian processes.\\

\subsection{Random tree Besov variables}\label{Sec:RandomTreeBesovVariables}

We will now introduce the random tree Besov priors.
Since we consider functions in the interior of $D=[0,1]^d$ we see that
$f^\ell_{ jk}=0$ if $k_l<0$ or $k_l>2^j-1$ for some $l=1,\dots,d$. 
We call the remaining set of indices of the wavelet coefficients the {\it entire tree} and denote this by
\begin{align*}
\oT=\{(j,k)\in \N\times \N^d\,,\, j\geq 0,\ k=(k_1,\dots,k_d),\
0\leq k_l\leq 2^j-1\}.
\end{align*}

Note that the index $\ell$ in Section \ref{Sec:BesovMeasures} denotes the number of parallel trees. For $1$D signals $\ell=1$ and we can organise the wavelet coefficients into a binary tree. For $2$D images $\ell\in{1,2,3}$ making the structure more complex since we need to consider three parallel quadtrees: one for vertical, horizontal, and diagonal subbands. 

The idea of the random tree Besov priors presented in \cite{Kekkonen2023} was to use the Karhunen-Lo\`eve expansion with a wavelet basis to create priors similar to \eqref{eq:BesovPrior} but to choose non-zero wavelets in the sum in a systematic way. To achieve this we will introduce the set of {\it proper subtrees}:
\begin{align*}
\Gamma=\{T\subset\oT\,|\,
\hbox{if $(j,k)\in T$ then $(j-1,[k/2])\in T$}\},
\end{align*}
where $[k/2]=([k_1/2],\dots,[k_d/2])$ is the vector whose
elements are the integer parts of $k_l/2$. This means that for any $T\in\Gamma$ and any node $(j,k)\in T$, all of its ancestors also belong to the tree $T$, and therefore every node in a proper subtree is connected to the root node. 
The random subtree can be generated by a Galton-Watson-type branching mechanism on the dyadic tree, where each potential child node is included independently with probability $\beta$ conditional on its parent being included.

For simplicity,  we assume below that all the bands $\ell$ have the same subtree structure. This assumption can easily be relaxed by considering separate subtrees $T^\ell$ instead.  
We can now introduce the random tree Besov priors.

\begin{definition}\label{Def:TreeRandomVariable}
Let $\{\psi^\ell_{jk}\}_{\ell\in\kauno, j\in\N,\,k\in\Z^d}$ be an $r$-regular wavelet basis for $L^2(\R^d)$, and $\beta=2^{\gamma-d}$ with some $\gamma\in(-\infty,d\,]$. Consider pairs $(X,T)$ where $X$ is an $\R^{\oT^{2^d-1}}$-valued random variable, and $T\in {\Tt}$ is a random tree. We assume that $X$ and $T$ are independent random variables, having the following distributions
\begin{itemize}
\item The sequence $X$ consists of i.i.d random variables  with probability density proportional to $\exp (-\frac{1}{2\kappa^p}|x|^{p})$, $\kappa>0$ and $1\leq p <\infty$, that is, $X_{jk}^\ell\sim\Nn_p(0,\kappa^p)$.
\item The proper subtree $T$ is built recursively. The root node is always included. For each other node, conditional on its parent being in $T$, the node is included independently with probability $\beta$; if its parent is not in $T$, the node is excluded.
\end{itemize}
Let $f$ be the random function
\begin{align*}
f(x)=\sum_{\substack{(j,k)\in T\\ \ell\in\kauno}}
2^{-j\big(s+\frac d2-\frac dp\big)}X^\ell_{j,k}\psi^\ell_{j,k}(x),\quad x\in D,
\end{align*}
where $s<r$. We say that $f$ is a $B^{s}_{pp}$-random variable with wavelet density index $\beta$.
\end{definition}

In \cite{Kekkonen2023} the wavelet density index was chosen manually but we would like to choose $\beta$ automatically from data.
Below we will consider level dependent $\boldsymbol{\beta}=(\beta_j)_j$ to allow varying sparsity levels since it is common that the wavelet coefficients of a signal or image are sparser at the finer resolution scales than at the coarser scales. 
We will assign a hyperprior for $\boldsymbol{\beta}$ and consider the levelwise maximum a posteriori (MAP) estimates for the wavelet density index.

\section{Tree pruning with automatic $\boldsymbol{\beta}$ selection \label{Sec:BetaSelection}}

In this section we develop our tree–pruning scheme with automatic selection of the level–dependent wavelet density indices $\boldsymbol{\beta}=(\beta_j)_{j=1}^J$. We begin by introducing the problem and recall the tree–pruning algorithm for a fixed, known $\beta$ in Section \ref{Sec:PruningFixedBeta} in order to set notation and highlight the role of $\beta$ as a regularisation parameter. Sections \ref{Sec:PruningGaussian} and \ref{Sec:PruningLaplace} then describe our main contribution: a hierarchical Bayesian model in which each $\beta_j$ is equipped with a hyperprior and is estimated from the data via levelwise MAP optimisation, for the Gaussian and Laplace base priors respectively. 

We consider the nonparametric regression problem where
we have observations at $2^{d(J+1)}$, $J\in\N$, regularly spaced points $x_i$ on $D=[0,1]^d$, $d=1,2$, of some unknown function $f$ subject to noise
\begin{align}\label{eq:DenoisingMain}
    M_i=f(x_i)+W_i,
\end{align}
where $W_i\sim \Nn(0,\sigma^2)$ is white noise independent of $f$. We address the cases $p=1$ and $p=2$ which correspond to the non-zero wavelet coefficients of $f$ being drawn from the Laplace and Gaussian distributions respectively. 

The wavelet coefficients can be arranged in a finite tree of the form
\begin{equation*}
\oT=\{(j,k)\in \N\times \N^d\ |\ 0\le j\le J,\ k=(k_1,\dots,k_d),\ 0\leq k_l\leq 2^j-1\}
\end{equation*}
and the non-zero wavelet coefficients will be in a finite proper subtree
\begin{align*}
\Gamma=\{T\subset\oT\,|\,
\hbox{if $(j,k)\in T$ then $(j-1,[k/2])\in T$}\},
\end{align*}
where $[k/2]$ is the integer part of $k/2$. 
We denote $(j',k')\prec(j,k)$ if $(j,k)$ is an ancestor of $(j',k')$.
The complete subtree with a root node $(j,k)$ can then be written as
\begin{align*}
\oT_{(j,k)} =\{(j',k')\in \oT\ |\ (j',k')\preceq (j,k)\}.
\end{align*}

We model the unknown wavelet density indices $\beta_j$, $j=1,\dots,J$, with a hyperprior
\begin{align*}
\pi(\beta_j)=2^{(1+a)}(1+a)(0.5-\beta_j)^a, 
\quad \beta_j\in[0,0.5],\ a\geq0.  
\end{align*}
We restrict $\beta_j\in [0,0.5]$ so that the prior acts as a sparsity-promoting regulariser. For $\beta_j<0.5$ we have $-\log(\beta_j)>-\log(1-\beta_j)$, so adding nodes is discouraged \textit{a priori}. The strength of the hyperprior is controlled by the parameter $a$: as $a$ increases, the prior penalises large $\beta_j$ more heavily, leading to sparser trees and consequently stronger regularisation.

To generate a random subtree with wavelet density $\beta=(\beta_j)_{j=1}^J$, we first consider the nodes independently and set the probability for a node to be included as $\Pp(t_{jk}=1 \g \beta_j)=\beta_j$, and correspondingly the probability for exclusion as $\Pp(t_{jk}=0 \g \beta_j)=1-\beta_j$. 
The resulting proper subtree $T$ then consists of all nodes that connect to the root node, that is, the nodes $(j,k)$ for which
\begin{align*}
\wt_{jk}=\prod_{(j',k')\succeq(j,k)}t_{j'k'} =1.
\end{align*}

We model the unknown function $f$ using a truncated random tree Besov prior and write 
\begin{align}\label{eq:DefPrior}
\begin{split}
f(x) & =\sum_{(j,k)\in T} \langle f,\psi_{jk}\rangle\psi_{jk}(x)\\
& =\sum_{(j,k)\in \oT} \wt_{jk} g_{jk}\psi_{jk}(x),
\end{split}
\end{align}
where $g_{jk} \sim \Nn(0,\kappa^2)$ or $g_{jk} \sim \text{Laplace}(0,\kappa)$, and $\wt_{jk}\in\{0,1\}$ defines if a node $(j,k)\in \oT$ is included as described above. The variables $g_{jk}, t_{jk}$ are assumed to be mutually independent.

We consider noise of the form $W=\sum w_{jk}\psi_{jk}$ where $w_{jk}\sim \Nn(0,\sigma^2)$. 
The data can then be written as $M=\sum m_{jk}\psi_{jk}(x)$, where
\begin{align*}
m_{jk}=\left\{\begin{array}{ll}g_{jk}+w_{jk}, & \textrm{when}\ \wt_{jk}=1\\
w_{jk}, & \textrm{when}\ \wt_{jk}=0. \end{array} \right.
\end{align*}

Since $g_{jk}, t_{jk}$ are assumed to be mutually independent, the posterior distribution of $\boldsymbol{g}=(g_{jk})_{(j,k)\in\oT}$, $\boldsymbol{t}=(t_{jk})_{(j,k)\in\oT}$, and $\boldsymbol{\beta}=(\beta_j)_{j=1}^J$, given the data $\boldsymbol{m}=(m_{jk})_{(j,k)\in\oT}$, can be written as
\begin{align*}
\pi(\boldsymbol{g},\boldsymbol{t},\boldsymbol{\beta}\ |\ \boldsymbol{m}) & \propto\pi(\boldsymbol{m}\ |\ \boldsymbol{g},\boldsymbol{t})\pi(\boldsymbol{g})\pi(\boldsymbol{t}\g \boldsymbol{\beta})\pi(\boldsymbol{\beta})\\
& =\prod_{(j,k)\in \oT} \pi(m_{jk}\ |\ g_{jk},\wt_{jk})\pi(g_{jk})\pi(t_{jk}\g\beta_j)\pi(\beta_j)\\
& = \prod_{(j,k)\in \oT} z_{jk},
\end{align*}
where we have denoted $z_{jk}=\pi(g_{jk},\widetilde{t}_{jk},\beta_j \g m_{jk})$.

The posterior for $g_{jk}$ and $\beta_j$ when the node $(j,k)$ is included in the tree is given by
\begin{align*}
z_{jk}^1 & =\pi(g_{jk},\beta_j \g m_{jk},\widetilde{t}_{jk}=1)\\
& \propto \exp\Big(-\frac{1}{2\sigma^2}(m_{jk}-g_{jk})^2-R(g_{jk})\Big)\beta_j\Big(\frac{1}{2}-\beta_j\Big)^a
\end{align*}
and the posterior when the node is excluded is 
\begin{align*}
z_{jk}^0 & =\pi(g_{jk},\beta_j \g m_{jk},\widetilde{t}_{jk}=0)\\
& \propto \exp\Big(-\frac{1}{2\sigma^2}m_{jk}^2-R(g_{jk})\Big)(1-\beta_j)\Big(\frac{1}{2}-\beta_j\Big)^a, 
\end{align*}
where $\pi(g_{jk})\propto\exp(-R(g_{jk})),$ with $R(g_{jk})={g_{jk}^2}/(2\kappa^2)$ when $g_{jk}\sim\Nn(0,\kappa^2)$ and $R(g_{jk})={|g_{jk}|}/\kappa$ when $g_{jk}\sim\text{Laplace}(0,\kappa)$.

\subsection{Tree pruning with a fixed $\beta$}\label{Sec:PruningFixedBeta}

The case in which the wavelet density index $\beta$ is fixed and given was considered in \cite{Kekkonen2023}. We will discuss this algorithm shortly for completeness. For simplicity, we will describe the case $d=1$ below. 

If $\beta$ is fixed, we can write the optimisation problem in the form
\begin{align}\label{eq:Optimisegt}
\begin{split}
(\hat{\boldsymbol{g}},\hat{\boldsymbol{t}}) & = \argmax_{\boldsymbol{g},\boldsymbol{t}} \pi(\boldsymbol{g},\boldsymbol{t}\g \boldsymbol{m})\\
& = \argmax_{\boldsymbol{g},\boldsymbol{t}} \Bigg[ z^1_{00}
 +\sum_{k=0}^1 \bigg( t_{1k}\Big(z^1_{1k}
 + \!\!\! \sum_{(j',k')\prec(1,k)} \!\!\! z_{j'k'}\Big)
 + (1-t_{1k}) \!\!\! \sum_{(j',k')\preceq(1,k)} \!\!\! z^0_{j'k'}\bigg)\Bigg]
\end{split}
\end{align}  
where we have assumed that the root node $(0,0)$ is always chosen. Above the optimisation problem is split into two subproblems concerning optimisation of the left and right subtrees with root nodes $(1,0)$ and $(1,1)$. A new root node $(1,k)$, $k=0,1$, is either included in the tree or excluded. Including the root node leads to two new optimisation problems, while excluding the node prunes the whole branch. Since we do not know the weight of the optimised subtree we cannot yet compare whether including the node or pruning the branch is an optimal choice. To solve the optimisation problem we continue the recursive splitting until the bottom level $j=J$, where $\wt_{Jk}=t_{Jk}$, is reached. At the bottom level we simply compare $z^1_{Jk}$ and $z^0_{Jk}$ componentwise. The optimal tree can then be built from bottom up by comparing the weight of an optimised subtree to the weight of pruning the branch. 

In \cite{Kekkonen2023} solving the MAP estimate for $\boldsymbol{g}$ and $\boldsymbol{t}$ with a fixed $\beta$ was shown to lead to a simple tree pruning algorithm when $g_{jk}\sim\Nn(0,1)$ and to tree pruning with additional soft thresholding when $g_{jk}\sim \text{Laplace}(0,\kappa)$. In tree pruning, all branches that do not carry enough information are pruned, resulting in zero wavelet coefficients in the sum \eqref{eq:DefPrior}. How much information is considered enough is determined by the wavelet density index $\beta$ which acts as a regularisation parameter. 

In the following we choose $\hat{\boldsymbol{\beta}}=(\hat{\beta})_{j=1}^J$ to be the levelwise MAP estimates and then use the selected $\hat{\beta}_j$ in the optimisation problem \eqref{eq:Optimisegt}. Similarly to $\hat{\boldsymbol{g}}$ and $\hat{\boldsymbol{t}}$ the wavelet density index $\hat{\boldsymbol{\beta}}$ is selected recursively starting from the  bottom level $j=J$, fixing the selection, and moving up a level.

Below we first consider the case $w_{jk}\sim \Nn(0,1)$ and $g_{jk}\sim\Nn(0,1)$. Proposition \ref{Prop:scaling} shows that in practice, under Gaussian base prior, the noise and prior variances $\sigma^2$ and $\kappa^2$ influence only the optimal choice of the wavelet density index $\beta$.

\subsection{Automatic $\beta$ selection with Gaussian draws}\label{Sec:PruningGaussian}

The problem of maximising the posterior $\pi(\boldsymbol{g},\boldsymbol{t},\boldsymbol{\beta} |\ \boldsymbol{m})$ is equivalent to minimising $-\log\big(\prod_{(j,k)\in \oT} z_{jk}\big)$.
On the bottom level $j=J$ a node is included in the tree if 
\begin{align*}
-\log(z^1_{Jk})\leq -\log(z^0_{Jk}),        
\end{align*}
that is, if the weight of including the node is smaller than the weight of excluding it. Notice that the penalty for including a node $-\log(\beta)$, compared to the penalty for excluding a node $-\log(1-\beta)$, is regularising only when $\beta<0.5$.

We assume Gaussian base prior $g_{jk}\sim\Nn(0,1)$ and noise $w_{jk}\sim \Nn(0,1)$. On the bottom level the weight resulting from including a node is
\begin{align*}
Y^1_{Jk}
= \min_{g_{Jk}}\big(-\log(z^1_{Jk})\big)
= \frac{1}{4}m^2_{Jk}-\log\bigg(\beta_J\Big(\frac{1}{2}-\beta_J\Big)^a\bigg)
\end{align*}
which is attained by $\hat{g}^1_{Jk}=m_{Jk}/2$. If a node is not included we set $\hat{g}^0_{Jk}=0$ leading to weight
\begin{align*}
Y^0_{Jk}
= \min_{g_{Jk}}\big(-\log(z^0_{Jk})\big)
= \frac{1}{2}m^2_{Jk}-\log\bigg((1-\beta_J)\Big(\frac{1}{2}-\beta_J\Big)^a\bigg).
\end{align*}

We include a node in the tree if selecting it leads to a lower cost than pruning it, that is, $Y^1_{Jk}\leq Y^0_{Jk}$ or equivalently
\begin{align*}
\frac{1}{4}m^2_{Jk}-\log(\beta_J)
& \leq \frac{1}{2}m^2_{Jk}-\log(1-\beta_J). 
\end{align*}
Notice that the prior for $\beta$ does not play a role in whether a node is included or not. 
We see that, for a fixed value of $\beta_J$, the node $(J,k)$ should be included if 
\begin{align}\label{eq:GaussianChosen}
m_{Jk}^2\geq 4\log\bigg(\frac{1-\beta_J}{\beta_J}\bigg).
\end{align}

Denote by $A_\beta$ the set of included nodes, that is, the values of $k$ for which \eqref{eq:GaussianChosen} holds for a fixed $\beta$. We want to find the wavelet density index $\hat{\beta}_J$ that minimises the weight of the bottom row 
\begin{align}
\begin{split}
B_{J,A_{\beta_J}}
& = \sum_{k\in A_{\beta_J}} Y^1_{Jk}
+\sum_{k\notin A_{\beta_J}} Y^0_{Jk}\\
& = \sum_{k\in A_{\beta_J}}\frac{1}{4}m^2_{Jk} 
+ \sum_{k\notin A_{\beta_J}}\frac{1}{2}m^2_{Jk}\\
& \quad\quad +|A_{\beta_J}|\log\Big(\frac{1-\beta_J}{\beta_J}\Big)
-n_J\log\bigg((1-\beta_J)\Big(\frac{1}{2}-\beta_J\Big)^a\bigg)
\end{split}
\end{align}
where $|A_{\beta_J}|$ is the number of included nodes and $n_J=2^J$ is the number of nodes on the bottom row. We denote the data fit term by
$$L(m_J,\beta_J)=\sum_{k\in A_{\beta_J}}\frac{m^2_{Jk}}{4}+\sum_{k\notin A_{\beta_J}}\frac{m^2_{Jk}}{2}$$
and the regularisation penalty by
$$R(\beta_J)=|A_{\beta_J}|\log\Big(\frac{1-\beta_J}{\beta_J}\Big)
-n_J\log\bigg((1-\beta_J)\Big(\frac{1}{2}-\beta_J\Big)^a\bigg).$$
and write 
\begin{align}\label{eq:WeightBottomGaussian}
B_{J,A_{\beta_J}} = L(m_J,\beta_J)+R(\beta_J).
\end{align}

As $\beta_J$ increases, more nodes are included in the tree. However, the set of included nodes changes only at the points 
\begin{align*}
\beta_{Jk} = \frac{1}{1+\exp\big(\frac{1}{4}m^2_{Jk}\big)}.
\end{align*}
Therefore, to minimise $B_{J,A_{\beta_J}}$, it is sufficient to consider the $2^J$ grid points $\beta_{Jk}$. 
The penalty term $R(\beta_J)$ is a convex function, while the data term $L(m_J,\beta_J)$ is a decreasing function of $\beta_J$ on the interval $[0,0.5]$. Therefore, there exists a unique minimiser $\hat{\beta}_J=\argmin_{\beta_{Jk}} B_{J,A_{\beta_{Jk}}}$. 
Note that the $\beta$ grid is data-determined, the hyperprior only affects the argmin over the grid.

In practice, we reorder the measurements $m_{Jk}$ on the bottom level in descending order and denote this permutation by $m_{J\sigma(k)}=m_{J(k)}$, so that
\begin{align*}
m_{J(1)}\geq \dots \geq m_{J(n_J)}. 
\end{align*}
This reordering corresponds to arranging the wavelet density index grid points from smallest to largest. For $\beta_{J(1)}$, only the node with the largest measurement $m_{J(1)}$ is chosen, while for $\beta_J=\beta_{J(n_J)}$ all of the nodes on the bottom row are selected. The levelwise optimal $\hat{\beta}_J$ is found as the $\beta_{J(k)}$ that results in the first negative difference $B_{J,A_{\beta_{J(k)}}}-B_{J,A_{\beta_{J(k+1)}}}$. 

Once the optimal $\hat{\beta}_J$ has been solved we can use it to calculate the optimal $g_{Jk}$ and $t_{Jk}$ as in \cite{Kekkonen2023}. After this we fix the values on the bottom row and move to the next level $j=J-1$. The process for the levels $j<J$ is similar to the bottom level but this time we compare the weight of an optimised subtree with root node $(j,k)$ to the weight of pruning the whole branch. 

Denote the weight of a previously optimised subtree with root node $(j,k)$ by $F_{jk}$. The weight of including the optimised subtree with a root node $(j,k)$ while paying the inclusion penalty is
\begin{align*}
Y^1_{jk}=F_{jk}-\log\bigg(\beta_j\Big(\frac{1}{2}-\beta_j\Big)^a\bigg),    
\end{align*}
while the weight of pruning the branch is 
\begin{align*}
Y^0_{jk} 
& = \frac{1}{2}\|m{|_{T_{jk}}}\|^2
-\log\bigg((1-\beta_j)\Big(\frac{1}{2}-\beta_j\Big)^a\bigg)
-\sum_{l=j+1}^J n_l \log\bigg((1-\hat{\beta}_l)\Big(\frac{1}{2}-\hat{\beta}_l\Big)^a\bigg)\\
& = \frac{1}{2}\|m{|_{T_{jk}}}\|^2
-\log\bigg((1-\beta_j)\Big(\frac{1}{2}-\beta_j\Big)^a\bigg)
-C_j,
\end{align*}
where $n_l=2^{l-j}$ is the number of the nodes in the subtree on the level $l$ and $\hat{\beta}_l$, $l=j+1,\dots,J$, are the previously optimised wavelet density indices. 

Notice that $C_j=\sum_{l=j+1}^J n_l \log((1-\hat{\beta}_l)(0.5-\hat{\beta}_l)^a)$ collects the penalties for not selecting a node on levels $l>j$ in the subtree $T_{jk}$. Since the corresponding wavelet density indices $\hat{\beta}_l$, $l>j$, have already been fixed in the bottom–up recursion, $C_j$ does not depend on $\beta_j$ and is a constant in the optimisation with respect to $\beta_j$.

As before we include the optimised subtree if it is cheaper than pruning the branch, that is, if
\begin{align*}
Y^1_{jk}\leq Y^0_{jk}.
\end{align*}
Solving the above we see that, given a measurement $m_{jk}$, a node $(j,k)$ is chosen if 
\begin{align*}
\beta_{jk}\geq \frac{1}{1+\exp(\frac{1}{2}\|m|_{T_{jk}}\|^2-F_{jk}-C_j)},
\end{align*}
where $F_{jk}$ is the weight of a previously optimised subtree with root node $(j,k)$. 
As on the bottom row the prior for $\beta$ does not play a role in the selection of the grid points $\beta_{jk}$. They depend only on the data. 

To find a levelwise optimal $\beta_j$ we want to minimise the weight the row adds to the tree. Namely, we want to find $\hat{\beta}_j$ that minimises
\begin{align*}
B_{j,A_{\beta_j}}
& = \sum_{k\in A_{\beta_j}} Y^1_{jk}
+\sum_{k\notin A_{\beta_j}} Y^0_{jk}\\
& = \sum_{k\in A_{\beta_j}}F_{jk} 
+ \sum_{k\notin A_{\beta_j}}\Big(\frac{1}{2}\|m|_{T_{jk}}\|^2-C_j\Big)\\
& \quad\quad +|A_{\beta_j}|\log\Big(\frac{1-\beta_j}{\beta_j}\Big)
-n_j\log\bigg((1-\beta_j)\Big(\frac{1}{2}-\beta_j\Big)^a\bigg)\\
& = L(m_j,\beta_j)+R(\beta_j)
\end{align*}
where the data term is now 
$$L(m_j,\beta_j)= \sum_{k\in A_{\beta_j}}F_{jk} 
+ \sum_{k\notin A_{\beta_j}}\Big(\frac{1}{2}\|m|_{T_{jk}}\|^2-C_j\Big).$$
We absorb the penalty term $C_j$ into the data term so that the cost of pruning a branch can be compared directly with the optimised subtree weight $F_{jk}$, which accounts for the penalties associated with optimally including or excluding nodes within the subtree.

The optimal $\beta_j$ can be found the same way as on the bottom level but this time we need to reorder $D_{jk}=\frac{1}{2}\|m|_{T_{jk}}\|^2-F_{jk}-C_j$, the difference in the weight of including or pruning a branch. The algorithm is summarised in Algorithm \ref{alg:beta}. 

Notice that we have selected $\hat{\boldsymbol{\beta}}$ recursively from bottom up. It is possible that a node is considered to contain enough information on level $j_1$, leading to a certain $\hat{\beta}_{j_1}$, but the whole branch is later pruned on a level $j_2<j_1$. This would allow changing the value of $\hat{\beta}_{j_1}$ without affecting the final tree.

\begin{algorithm}[ht!]
\caption{Pseudo-code for automatic $\beta$ selection with Gaussian base prior}
\label{alg:beta}
$j=J$\\
  \For{$k = 0$ \KwTo $2^J - 1$}
  {Set $F_{Jk}=\frac{1}{4}m^2_{Jk}$\\
    Calculate grid points $\beta_{Jk} = \big(1+\exp(\frac{1}{4}m^2_{Jk})\big)^{-1}$
    \\
    Denote the set of included nodes $A_{\beta_{Jk}}=\{(J,\tilde{k}) \g \beta_{J\tilde{k}}\geq\beta_{Jk}\}$
    }
    
Find $\hat{\beta}_{J} =\arg\min \big(L(m_J,\beta_{Jk})+R(\beta_{Jk})\big)$ where\\
$L(m_J,\beta_{Jk})=\sum_{k\in A_{\beta_{Jk}}}F_{Jk}+\sum_{k\notin A_{\beta_{Jk}}}\frac{1}{2}m^2_{Jk}$ \\
$R(\beta_{Jk})=|A_{\beta_{Jk}}|\log\Big(\frac{1-\beta_{Jk}}{\beta_{Jk}}\Big)
-2^J\log\bigg((1-\beta_{Jk})\Big(\frac{1}{2}-\beta_{Jk}\Big)^a\bigg)$
\vspace{4mm}

$j = j-1$\\
\While{$1 \leq j \leq J$}
    {\For{$k = 0$ \KwTo $2^j - 1$}
    {For the left child $(j',k')$ of node $(j,k)$\\
    \If{
    $F_{j'k'}-\log(\hat{\beta}_{j'})
    <\frac{1}{2}\|m|_{T_{j'k'}}\|^2-(2^{J-j}-1)\log(1-\hat{\beta}_{j'})$}
   {$t_{j'k'}=1$ and 
   $F_{jk}=\frac{1}{4}m_{jk}^2+F_{j'k'}-\log(\hat{\beta}_{j'})$}
   \Else{
   $t_{j'k'}=0$ and 
   $F_{jk}=\frac{1}{4}m^2_{jk}+\frac{1}{2}\|m|_{T_{j'k'}}\|^2-\sum_{\ell=j'}^J 2^{\ell-j'}\log(1-\hat{\beta}_{\ell})$}
   
For the right child $(j'',k'')$ of node $(j,k)$\\
   \If{
   $F_{j''k''}-\log(\hat{\beta}_{j''})
    <\frac{1}{2}\|m|_{T_{j''k''}}\|^2-(2^{J-j}-1)\log(1-\hat{\beta}_{j''})$}
   {$t_{j''k''}=1$ and 
   $F_{jk}=F_{jk}+F_{j''k''}-\log(\hat{\beta}_{j''})$}
   \Else{
   $t_{j''k''}=0$ and 
   $F_{jk}=F_{jk}+\frac{1}{2}\|m|_{T_{j''k''}}\|^2-\sum_{\ell=j''}^J 2^{\ell-j''}\log(1-\hat{\beta}_{\ell})$}
   
   Calculate grid points $\beta_{jk} = \big(1+\exp(\frac{1}{2}\|m|_{T_{jk}}\|^2-F_{jk}-C_j)\big)^{-1}$ where\\ 
   $C_j=\sum_{\ell=j+1}^J 2^{\ell-j} \log((1-\hat{\beta}_\ell)(\frac{1}{2}-\hat{\beta}_\ell)^a)$\\
   Denote the set of included nodes 
   $A_{\beta_{jk}}=\{(j,\tilde{k}) \g \beta_{j\tilde{k}}\geq\beta_{jk}\}$
   }
   Find $\hat{\beta}_{j} =\arg\min \big(L(m_j,\beta_{jk})+R(\beta_{jk})\big)$ where\\
    $L(m_j,\beta_j)= \sum_{k\in A_{\beta_j}}F_{jk} 
    + \sum_{k\notin A_{\beta_j}}\Big(\frac{1}{2}\|m|_{T_{jk}}\|^2-C_j\Big)$ \;
    $R(\beta_{jk})=|A_{\beta_{jk}}|\log\Big(\frac{1-\beta_{jk}}{\beta_{jk}}\Big)
    -2^j\log\bigg((1-\beta_{jk})\Big(\frac{1}{2}-\beta_{jk}\Big)^a\bigg)$
    } 

$\tilde{t}_{00}=1$\\
\While{$1 \leq j \leq J$}{
  \For{$k = 0$ \KwTo $2^j - 1$}{
  $\wt_{jk}=\prod_{(j',k')\succeq(j,k)}t_{j'k'} $\\
    \If{$\tilde{t}_{jk} = 1$}{
      $g_{jk} = m_{jk}$
    }
    \Else{
      $g_{jk} = 0$
    }
  }
  $j = j-1$
}
\vspace{8mm}
\end{algorithm}

\begin{proposition}\label{Prop:scaling}
Consider the denoising problem of estimating $f$ from a noisy measurement \eqref{eq:DenoisingMain}. We assume a truncated random tree Besov prior for $f$ as described in \eqref{eq:DefPrior}. Denote by $T^{1}_\beta(m_1)$ the denoised signal arising from tree pruning with a fixed $\beta$ when $w_i\sim \Nn(0,1)$ and $g_{jk}\sim\Nn(0,1)$. 

Then, for $\sigma,\kappa>0$, the denoised signal $T^{\kappa}_\beta(m_\sigma)$ arising from a measurement with noise $w_i\sim\Nn(0,\sigma^2)$ and prior choice $g_{jk}\sim\Nn(0,\kappa^2)$ is given by $\tilde{T}^1_{\tilde{\beta}}(m_1)$, where the coefficients $\hat{g}^1_{jk}=m_{jk}/2$ are replaced by $\hat{g}^1_{jk}=\kappa^2m_{jk}/(\kappa^2+\sigma^2)$ and
\begin{align*}
    \widetilde{\beta}=\frac{\beta^{c}}{\beta^{c}+(1-\beta)^{c}}
\end{align*}
with $c=\sigma^2(\kappa^2+\sigma^2)/(2\kappa^2)>0$.
\end{proposition}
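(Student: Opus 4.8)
Since tree pruning with a fixed $\beta$ is, by construction, the MAP estimate, i.e.\ the minimiser of the negative log–posterior, the plan is to compare the two MAP optimisation problems directly. With $\beta$ fixed the hyperprior factor $(\tfrac12-\beta)^a$ is an irrelevant multiplicative constant, and with base prior $g_{jk}\sim\Nn(0,\kappa^2)$ and noise $w_{jk}\sim\Nn(0,\sigma^2)$ the negative log–posterior is, up to an additive constant,
\begin{align*}
\Phi_{\sigma,\kappa,\beta}(\boldsymbol g,\boldsymbol t)
=\sum_{(j,k)\in\oT}\Big[\tfrac{1}{2\sigma^2}(m_{jk}-\wt_{jk}g_{jk})^2+\tfrac{1}{2\kappa^2}g_{jk}^2-t_{jk}\log\beta-(1-t_{jk})\log(1-\beta)\Big],
\end{align*}
where $\wt_{jk}=\prod_{(j',k')\succeq(j,k)}t_{j'k'}$ and $t_{00}=1$ is forced.

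\emph{Step 1: profile out $\boldsymbol g$.} For fixed $\boldsymbol t$ each $g_{jk}$ appears only in the $(j,k)$–th summand, so the minimisation splits into one–dimensional quadratics: when $\wt_{jk}=1$ one gets $\hat g_{jk}=\kappa^2 m_{jk}/(\kappa^2+\sigma^2)$ with value $m_{jk}^2/(2(\kappa^2+\sigma^2))$ (this is the standard ``product of two Gaussians'' computation), and when $\wt_{jk}=0$ one gets $\hat g_{jk}=0$ with value $m_{jk}^2/(2\sigma^2)$. This already produces the asserted coefficient formula and shows $\hat g_{jk}$ depends only on whether the node is effectively included, not on the rest of the tree. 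Writing $\tilde\Phi_{\sigma,\kappa,\beta}(\boldsymbol t)=\min_{\boldsymbol g}\Phi_{\sigma,\kappa,\beta}$ leaves a function of $\boldsymbol t$ alone.

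\emph{Step 2: reduce to proper subtrees and linearise.} Any minimiser of $\tilde\Phi$ over $\{0,1\}^{\oT}$ has $\{t_{jk}=1\}$ closed under taking ancestors: if some $(j,k)$ were included while its parent were not, flipping $t_{jk}$ to $0$ changes no $\wt$ value (the parent already forces $\wt=0$ along that branch) but strictly lowers the penalty, since $\beta<\tfrac12$ gives $\log\frac{1-\beta}{\beta}>0$. On this set $\wt_{jk}=t_{jk}$, so substituting $\tfrac{m_{jk}^2}{2\sigma^2}=\tfrac{m_{jk}^2}{2(\kappa^2+\sigma^2)}+\tfrac{\kappa^2 m_{jk}^2}{2\sigma^2(\kappa^2+\sigma^2)}$ and collecting the $\boldsymbol t$–independent terms yields
\begin{align*}
\tilde\Phi_{\sigma,\kappa,\beta}(\boldsymbol t)=\mathrm{const}+\sum_{(j,k)\in\oT}t_{jk}\Big(\log\tfrac{1-\beta}{\beta}-\tfrac{\kappa^2}{2\sigma^2(\kappa^2+\sigma^2)}\,m_{jk}^2\Big).
\end{align*}
The same computation with $\sigma=\kappa=1$ and density $\widetilde\beta$ gives $\mathrm{const}+\sum_{(j,k)}t_{jk}\big(\log\tfrac{1-\widetilde\beta}{\widetilde\beta}-\tfrac14 m_{jk}^2\big)$. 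The two per–node costs are positive multiples of one another iff the coefficients of $m_{jk}^2$ agree up to a factor $\mu>0$, forcing $\mu=\tfrac{2\kappa^2}{\sigma^2(\kappa^2+\sigma^2)}=1/c$, and simultaneously $\log\tfrac{1-\beta}{\beta}=\mu\log\tfrac{1-\widetilde\beta}{\widetilde\beta}$, i.e.\ $\tfrac{1-\widetilde\beta}{\widetilde\beta}=\big(\tfrac{1-\beta}{\beta}\big)^{c}$; solving for $\widetilde\beta$ gives exactly $\widetilde\beta=\beta^{c}/(\beta^{c}+(1-\beta)^{c})$ with $c=\sigma^2(\kappa^2+\sigma^2)/(2\kappa^2)>0$ (note $\beta<\tfrac12\Rightarrow\widetilde\beta<\tfrac12$, so Step~2 applies to the rescaled problem as well).

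Hence $\tilde\Phi_{\sigma,\kappa,\beta}=\mathrm{const}+\tfrac1c\,\tilde\Phi_{1,1,\widetilde\beta}$ as functions of $\boldsymbol t$; since $c>0$ the two problems have the same minimising tree, and together with the coefficient formula from Step~1 this is precisely the claim. I expect the only genuinely delicate points to be the reduction–to–proper–subtrees argument and the careful bookkeeping of the $\boldsymbol t$–independent constants; the rest is routine algebra. The one structural subtlety worth emphasising is that the correspondence must be established for the whole tree simultaneously — which is exactly what the affine equivalence of the full objectives $\tilde\Phi$ delivers — rather than merely for the level–by–level inclusion thresholds as in the description of the pruning algorithm.
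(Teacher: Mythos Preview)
Your proposal is correct, and the core computation --- minimising the per-node quadratic to obtain $\hat g_{jk}=\kappa^2 m_{jk}/(\kappa^2+\sigma^2)$ with value $m_{jk}^2/(2(\kappa^2+\sigma^2))$ versus $m_{jk}^2/(2\sigma^2)$, then comparing inclusion against exclusion --- is exactly what the paper does. The difference is one of packaging: the paper writes out the comparison only at the bottom level $j=J$, derives the identity $\log\!\big((1-\beta)/\beta\big)=\tfrac{1}{c}\cdot\tfrac14 m_{Jk}^2$ there, and leaves the extension through the recursion implicit; you instead profile out $\boldsymbol g$ over the whole tree, reduce to proper subtrees (so $\wt_{jk}=t_{jk}$), and exhibit $\tilde\Phi_{\sigma,\kappa,\beta}=\mathrm{const}+\tfrac{1}{c}\,\tilde\Phi_{1,1,\widetilde\beta}$ as an affine relation between the full objectives. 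Your formulation is cleaner and actually closes the point you raise at the end: because the affine equivalence holds for the complete profiled objective, the minimising trees coincide at every level simultaneously, whereas the paper's bottom-row argument must be read as a template to be repeated through the recursion.
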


Scaling the coefficients $\hat{g}^1_{jk}$ means scaling the denoised signal, which is undesirable. For this reason, in practice we set $\hat{g}^1_{jk}=m_{jk}$. Proposition \ref{Prop:scaling} then shows that once the optimal $\hat{\beta}^1_1$ has been estimated for $T^1_\beta(m_1)$ the corresponding optimal value $\hat{\beta}^\kappa_\sigma$ for $T^{\kappa}_\beta(m_\sigma)$ can be obtained directly from $\hat{\beta}^1_1$. 
Equivalently, one may view the noise and prior variances, 
$\sigma^2$ and $\kappa^2$, as influencing only the optimal choice of the regularisation parameter $\beta$.

The following remark shows how scaling the input signal influences the resulting denoised signal.

\begin{remark}
We see from Proposition \ref{Prop:scaling} that, for a constant $c$, $T^1_\beta(cm_1) =cT^1_{\widetilde{\beta}}(m_1)$ where 
\begin{align*}
    \widetilde{\beta}=\frac{\beta^{c^2}}{\beta^{c^2}+(1-\beta)^{c^2}}.
\end{align*}
\end{remark}

\begin{proof}[Proof of Proposition \ref{Prop:scaling}]
We start by noting that when $\beta$ is fixed and $w_{jk}\sim\Nn(0,\sigma^2)$, $g_{jk}\sim\Nn(0,\kappa^2)$ the posterior of $g_{jk}\g \tilde{t}_{jk}=1$ is given by
\begin{align*}
z_{jk}^1 & \propto \exp\Big(-\frac{1}{2\sigma^2}(m_{jk}-g_{jk})^2
-\frac{1}{2\kappa^2}g_{jk}^2)\Big)\beta
\end{align*}
and the posterior for $g_{jk}\g \tilde{t}_{jk}=0$ is 
\begin{align*}
z_{jk}^0 & \propto \exp\Big(-\frac{1}{2\sigma^2}m_{jk}^2-\frac{1}{2\kappa^2}g_{jk}^2)\Big)(1-\beta).
\end{align*}

In the bottom row $j=J$ the weight resulting from including a node $(J,k)$ is simply
\begin{align*}
    Y_{Jk}^1=\min_{g_{Jk}}\Big(-\log(z_{Jk}^1)\Big)
=\frac{m_{Jk}^2}{2(\kappa^2+\sigma^2)}-\log(\beta),
\end{align*}
which is achieved at $\hat{g}_{Jk}=\kappa^2m_{Jk}/(\kappa^2+\sigma^2)$, and the weight for not including the node is 
\begin{align*}
    Y_{Jk}^0=\min_{g_{Jk}}\Big(-\log(z_{Jk}^0)\Big)
=\frac{m_{Jk}^2}{2\sigma^2}-\log(1-\beta),
\end{align*}
following from the choice $\hat{g}_{Jk}=0$.

A node $(J,k)$ is included in the subtree if including the node carries less weight than turning it off, that is, when $Y_{Jk}^1\leq Y_{Jk}^0$ or equivalently when 
\begin{align*}
    \log\Big(\frac{1-\beta}{\beta}\Big)
    \leq \frac{\kappa^2m_{Jk}^2}{2\sigma^2(\kappa^2+\sigma^2)}
    =\frac{1}{c}\frac{m_{Jk}^2}{4},
\end{align*}
where $c=\sigma^2(\kappa^2+\sigma^2)/(2\kappa^2)>0$.  We can rewrite the above as 
\begin{align*}
    \log\Bigg(\Big(\frac{1-\beta}{\beta}\Big)^c\Bigg)
    =\frac{m_{Jk}^2}{4},
\end{align*}
which corresponds to 
\begin{align*}
    \log\Big(\frac{1-\widetilde{\beta}}{\widetilde{\beta}}\Big)
    =\frac{m_{Jk}^2}{4},
\end{align*}
where $\widetilde{\beta}=\beta^c/(\beta^c+(1-\beta)^c)\in[0,0.5]$ for $\beta\in[0,0.5]$ and $c>0$. 
\end{proof}

\subsection{Automatic $\beta$ selection with Laplace draws}\label{Sec:PruningLaplace}

Optimising $\boldsymbol{\beta}$ when $g_{jk}\sim \text{Laplace}(0,\kappa)$ is similar to the Gaussian case though we need to take into account the additional soft thresholding which forces $\hat{g}_{jk}=0$ for all $|m_{jk}|\leq\frac{1}{\kappa}$. On the bottom level $j=J$ the weight of including a node is 
\begin{align*}
Y^1_{Jk} 
= \min_{g_{Jk}}\big(-\log(z^1_{Jk})\big)
=\begin{cases}
\frac{1}{2}m_{Jk}^2 -\log\Big(\beta_J\big(\frac{1}{2}-\beta_J\big)^a\Big) 
& \text{when } |m_{Jk}|\leq \frac{1}{\kappa} \\
\frac{1}{\kappa}|m_{Jk}|-\frac{1}{2\kappa^2}
-\log\Big(\beta_J\big(\frac{1}{2}-\beta_J\big)^a\Big)  
& \text{when } |m_{Jk}|> \frac{1}{\kappa},
\end{cases}
\end{align*}
which is attained at 
\begin{align*}
\hat{g}^1_{Jk}=
\argmin_{g_{Jk}}\big(-\log(z^1_{Jk})\big)
=\begin{cases}
      0 & \text{when}\ |m_{Jk}|\leq \frac{1}{\kappa} \\
     m_{Jk}-\text{sign}(m_{Jk})\frac{1}{\kappa} & \text{when}\ |m_{Jk}|> \frac{1}{\kappa}.
 \end{cases}
\end{align*}
As in the Gaussian case the weight of a pruned node is 
\begin{align*}
Y^0_{Jk} 
= \min_{g_{Jk}}\big(-\log(z^0_{Jk})\big)
=\frac{1}{2}m_{Jk}^2 -\log\Big((1-\beta_J)\big(\frac{1}{2}-\beta_J\big)^a\Big)
\end{align*}
which follows from setting $\hat{g}^0_{Jk}=0$.

A bottom row node is included in the tree if $Y^1_{Jk}\leq Y^0_{Jk}$. We immediately see that a node that carries little information $|m_{Jk}|<1/\kappa$ is always pruned on the bottom level since the penalty for including it is larger and there is no gain on the data fit. 

A bottom node is included in the tree if $|m_{Jk}|>1/\kappa$ and 
\begin{align}
\frac{1}{\kappa}|m_{Jk}|-\frac{1}{2\kappa^2}-\log(\beta_J)
\leq \frac{1}{2}m^2_{Jk}-\log(1-\beta_J).
\end{align}
In particular, given a measurement $|m_{Jk}|>1/\kappa$ the node $(J,k)$ is selected if 
\begin{align*}
\beta_J\geq\frac{1}{1+\exp\big(\frac{1}{2}m^2_{Jk}-\frac{1}{\kappa}|m_{Jk}|+\frac{1}{2\kappa^2}\big)}.    
\end{align*}

As before we want to solve 
\begin{align*}
\hat{\beta}_J = \argmin_{\beta_{Jk}} B_{J,A_{\beta_{Jk}}} 
= \argmin_{\beta_J} \big(L(m_J,\beta_{Jk})+R(\beta_{Jk})\big),
\end{align*}
where the penalty term $R(\beta_J)$ is the same as in \eqref{eq:WeightBottomGaussian} and the data term is
\begin{align*}
L(m_J,\beta_{Jk}) = \sum_{k\in A_{\beta_{Jk}}}\bigg(\frac{1}{\kappa}|m_{Jk}|-\frac{1}{2\kappa^2}\bigg)
+\sum_{k\notin A_{\beta_{Jk}}} \frac{1}{2}m^2_{Jk}.
\end{align*}
The minimiser can be found similarly to the Gaussian case, but it suffices to consider only the grid points 
$\beta_{Jk}=(1+\exp(m^2_{Jk}/2-|m_{Jk}|/\kappa+1/{2\kappa^2}))^{-1}$ corresponding to nodes $(J,k)$ where $|m_{Jk}|>1/\kappa$.  

Once $\hat{\beta}_J$ has been found we can move to the next level. When $j<J$ including a node into the tree can be beneficial even when $|m_{jk}|<1/\kappa$ if the optimised subtree starting from the node $(j,k)$ carries enough information. In this case we set $\hat{g}^1_{jk}=0$, $\tilde{t}_{jk}=1$, which provides additional regularisation within the optimised subtree.

\section{Numerical Results \label{Sec:Numerical}}
We will now showcase the performance of the proposed automatic 
$\beta$-selection tree pruning method in 1D and 2D denoising, and 1D deconvolution.  

\subsection{Signal denoising and deconvolution}

In our first example, we consider the blocks test data from \cite{Donoho1994}, displayed on the left in Figure \ref{Denoise1D}. We assume that only a noisy signal, with signal to noise ratio $5$, is observed. We denoise the signal using random tree Besov priors with Haar wavelet basis. 

\begin{figure}[t]
\centering
\includegraphics[width=\textwidth]{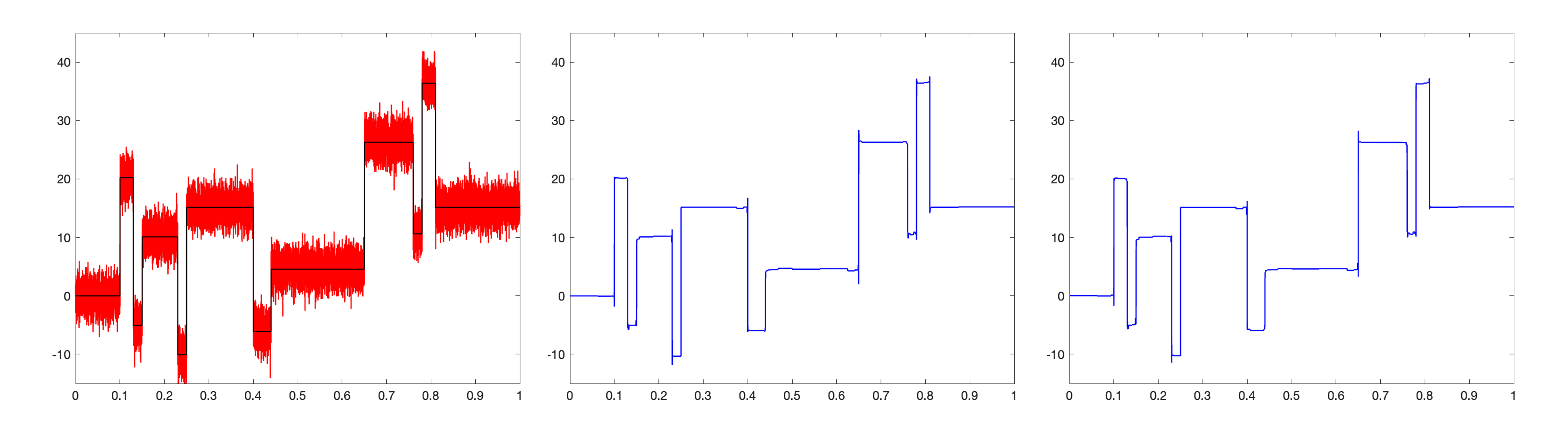}
    \caption{The original signal in black and the  measurement in red (left), denoised signal using Gaussian base prior and automatic $\beta$ selection (middle), and denoised signal using Laplace base prior and automatic $\beta$ selection (right).}
    \label{Denoise1D}
\end{figure}

For the automatic $\beta$ selection method we used hyperparameter $a=100$ but the algorithm is not sensitive to the exact value of $a$. Values between $15$ and $500$ produce the exact same reconstruction and values between $5$ and $1000$ produce very similar reconstructions with only a small difference in the relative error. 

The denoised signal under Gaussian base prior with automatic $\beta$ selection is shown in the middle in Figure \ref{Denoise1D}. The denoised signal corresponding to optimally chosen fixed $\beta=10^{-4}$ is visually indistinguishable. The relative errors between the original signal and the denoised signal arising from the automatic $\beta$ selection method and from the optimal fixed $\beta$ method are $1.116\%$ and $1.110\%$, respectively. 

\begin{figure}[b]
\centering
\includegraphics[width=\textwidth]{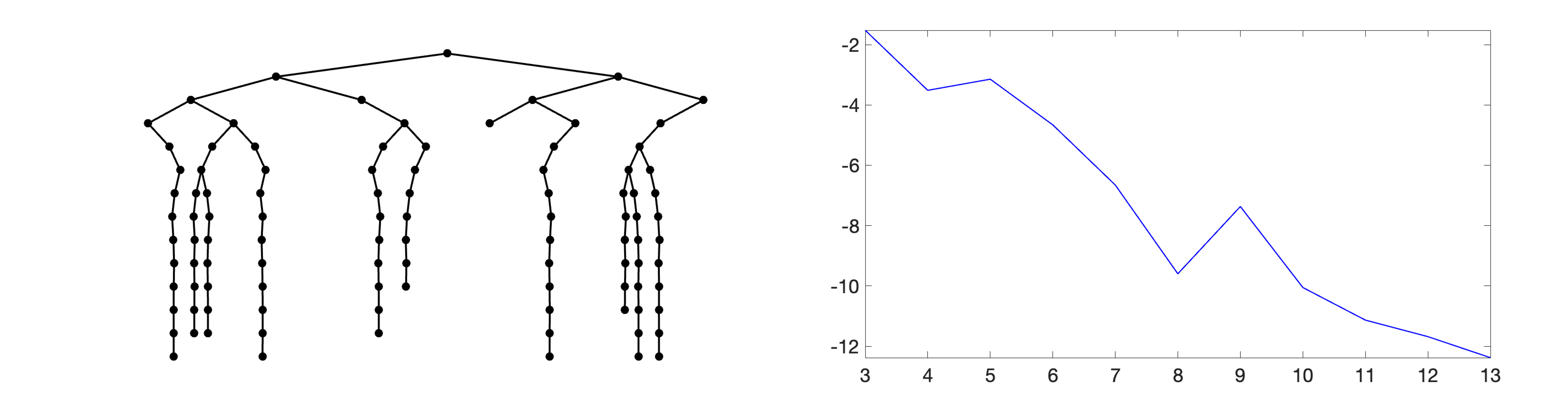}
    \caption{ Selected wavelet coefficient tree for the denoised signal under the Gaussian base prior (left) and the levelwise estimates 
    $\log(\hat{\boldsymbol{\beta}})$ (right).}
    \label{GaussianTreeBeta}
\end{figure}

The selected wavelet coefficient tree for the denoised signal under the Gaussian base prior with automatic $\beta$ selection is shown in Figure \ref{GaussianTreeBeta} on the left and the values $\log(\hat{\beta}_j)$ for $j=3,\dots,13$ are plotted on the right. We omit $j=1,2$ because $\hat{\beta}_1$ and $\hat{\beta}_2$ are extremely small and would dominate the scale. At the coarsest levels the cost difference between keeping a branch and pruning it is very large, so the data-determined $\beta$-grid concentrates near zero and the optimisation selects all nodes at those levels. From level $j=3$ onwards, pruning becomes non-trivial and some branches are removed, as can be seen in the tree plot. At the finest scales most coefficients carry little information, leading to extensive pruning; correspondingly, the estimated $\hat{\beta}_j$ at fine levels are small.

The denoised signal under the Laplace base prior with automatic $\beta$ selection is shown on the right in Figure \ref{Denoise1D}. Here too, the denoised signal corresponding to the optimally tuned fixed $\beta=10^{-4}$ is visually indistinguishable. The relative errors between the original signal and the denoised signal obtained with the automatic $\beta$ selection method and with the optimally tuned fixed-$\beta$ method are $1.185\%$ and $1.150\%$, respectively. We used $\kappa=1$ for both methods and $a=10$ for the automatic $\beta$ selection. The method is somewhat more sensitive to the choice of the hyperparameter $a$ than in the Gaussian base prior case.

We see that automatic $\beta$ selection yields near-optimal performance while eliminating manual tuning of the regularisation parameter. The denoised signals obtained under the Gaussian and Laplace base priors are very similar. In these experiments, the choice of wavelet basis appears to have a larger effect on the reconstruction than the choice of base prior.

As a second 1D example we consider a mildly ill-posed deconvolution problem. The observed signal is given by
\begin{align}
M=Af+W,
\end{align}
where $A$ is a known convolution operator, $f$ is a piecewise linear function, and the measurement contains $1\%$ Gaussian noise. The original and blurred noisy signals are shown on the left in Figure \ref{Deconvolution}. We solve the inverse problem using a plug-and-play scheme \cite{Venkatakrishnan2013}. In each iteration, the linear data-fidelity step for the convolution model is followed by a denoising step that applies our random tree Besov prior with Gaussian base prior and automatic $\beta$ selection to the current iterate.

The deconvolved signal is shown on the right in Figure \ref{Deconvolution}. The reconstruction exhibits a characteristic staircasing behaviour, similar to solutions obtained with total variation priors, while still capturing the main discontinuities accurately. This example is intended as a proof of concept demonstrating that the proposed automatic $\beta$-selection tree prior can be used as a plug-and-play denoiser inside more general inverse problem solvers; a more systematic study of deconvolution is left for future work.

\begin{figure}[t]
\centering
\includegraphics[width=\textwidth]{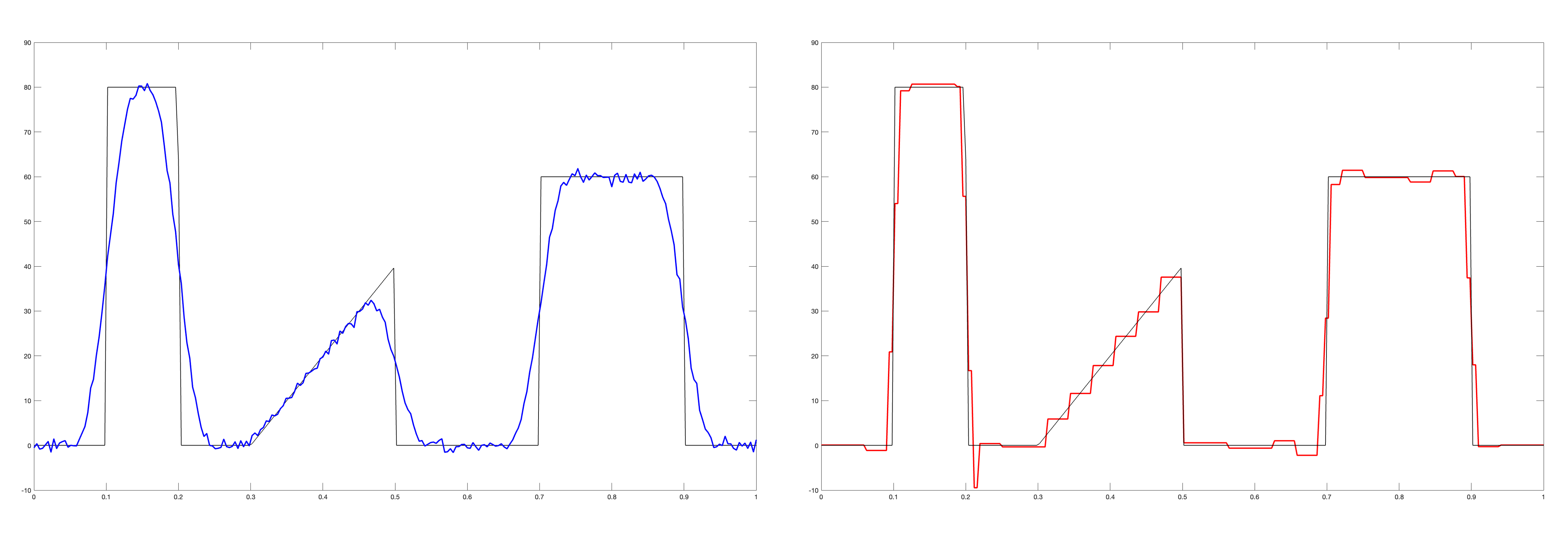}
    \caption{The original signal in black, measurement in blue and deconvolved signal in red.}
    \label{Deconvolution}
\end{figure}

\subsection{Image denoising}

The 2D experiments are conducted on open-access 2D images available at the Kodak Lossless True Color Image Suite. We compare the MAP estimators arising from random tree Besov priors with automatic $\beta$ selection and fixed optimally tuned $\beta$. The optimal regularisation parameter $\beta$ is chosen to maximise the Structural Similarity Index (SSIM) between the ground truth image and the denoised one. The original image and the noisy measurement, containing $7\%$ Gaussian noise, are shown in Figure \ref{Data2D}.   

\begin{figure}[h]
\centering
\includegraphics[width=\textwidth]{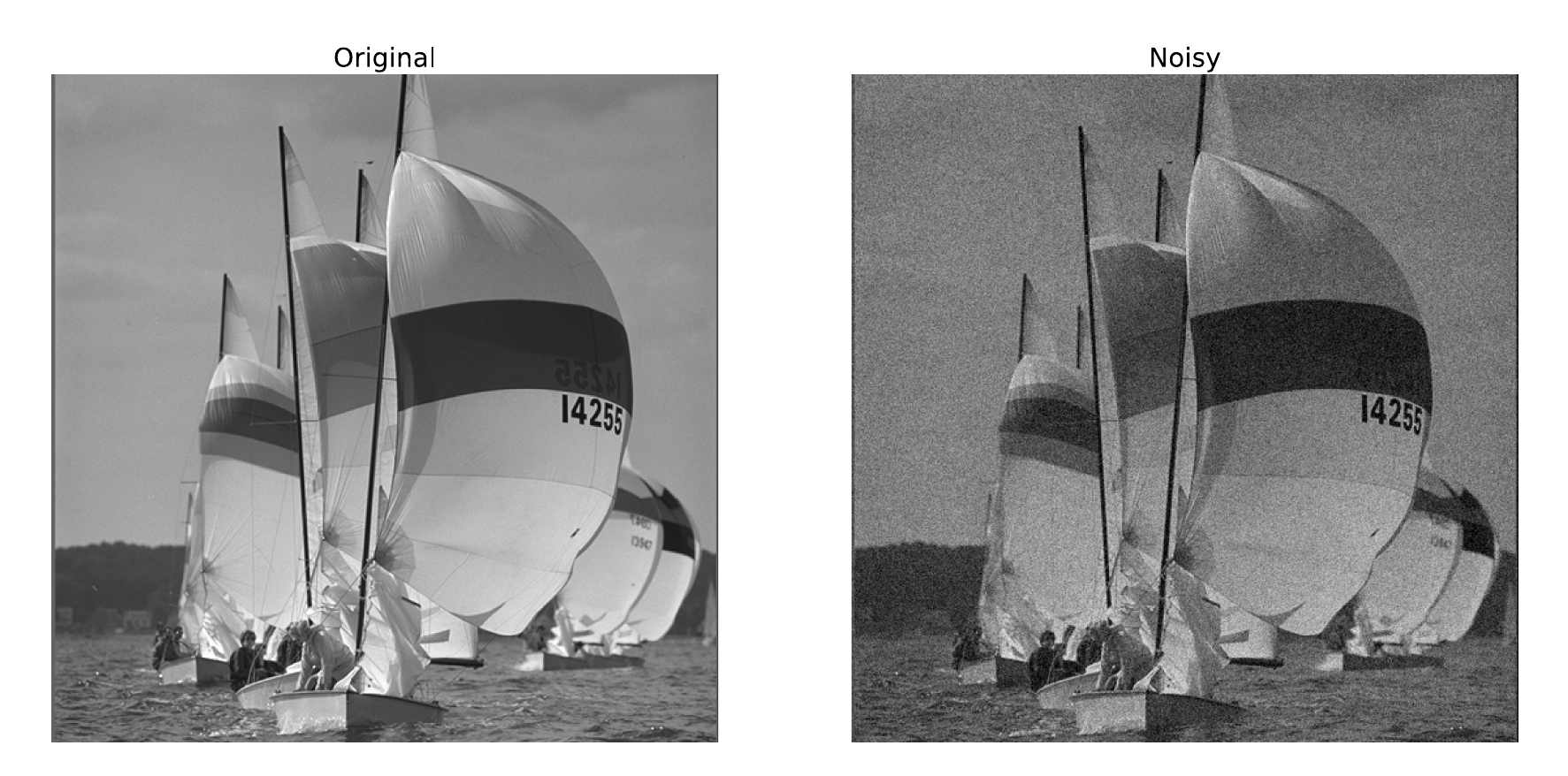}
    \caption{The original image and the noisy measurement with $7\%$ Gaussian noise.}
    \label{Data2D}
\end{figure}

We use Daubechies 2 wavelets to decompose the images, as they mitigate blocky artefacts while preserving edges through smooth transitions. The image dimension is $512\times 512$ and the original image values are in the interval $[0,1]$. For tree pruning with the automatic $
\beta$ selection method we scale the values to match the strength of the hyperprior. Based on testing, we established a heuristic rule that the scale value should be in $[200/\delta,300/\delta]$, where $\delta$ is the percentage of noise. 

For the fixed $\beta$ pruning method with Gaussian base prior, the optimal regularisation parameter, based on the SSIM, is calculated to be $\beta=0.4986$ and the SSIM  is equal to $0.7992$. 
With Laplace base prior the threshold parameter is set to $\kappa=0.11.$ For the soft pruning method with fixed $\beta$, the optimal regularisation parameter is $\beta=0.4993$ and the SSIM  is equal to $ 0.7715$. Note that $\kappa$ should be optimised separately and the optimal value will be different for fixed and automatic $\beta$ selection methods.
    
With Gaussian base prior pruning with automatic $\beta$ selection results in SSIM equal to $0.7967$. With Laplace base prior soft pruning with automatic $\beta$ selection results in SSIM equal to $0.7852$. As mentioned above the latter could be improved by optimising $\kappa$ separately.

All of the tree pruning methods outperform optimal soft and hard thresholding applied to the wavelet coefficients. See Table \ref{tab:denoising_results L2} for the results.

\begin{figure}[ht]
\centering
\includegraphics[width=\textwidth]{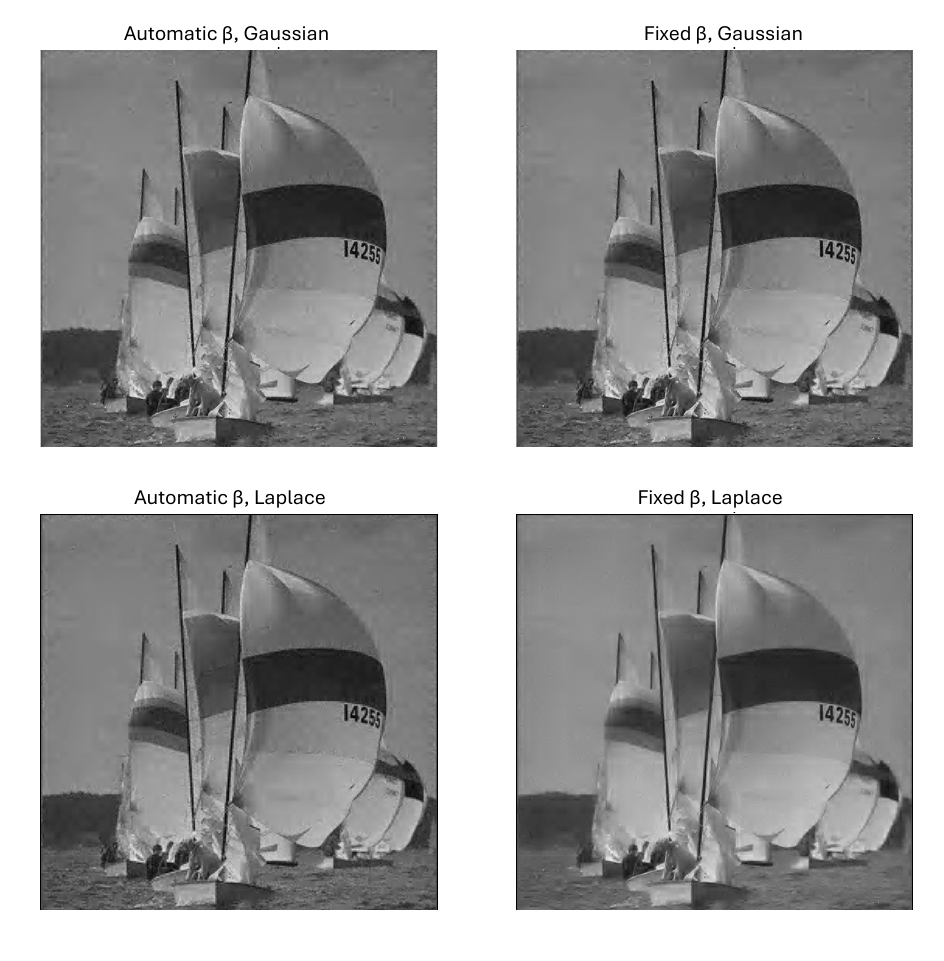}
    \caption{Denoised images using Gaussian base prior and automatic $\beta$ selection (top left), Gaussian base prior and optimal fixed $\beta$ (top right), Laplace base prior and automatic $\beta$ selection (bottom left), and Laplace base prior and optimal fixed $\beta$.}
    \vspace{0.5cm}
    \label{Denoised2D}
\end{figure}

\begin{table}[ht]
\vspace{0.5cm}
    \centering
    \caption{Comparison of tree pruning methods based on SNR, SSIM, and Relative Error for the Gaussian and Laplace base priors.}
    \label{tab:denoising_results L2}
    \begin{tabular}{|l|c|c|c|}
        \hline
        \textbf{} & \textbf{SNR (dB)} & \textbf{SSIM} & \textbf{Relative Error} \\
        \hline
        Noisy Image & 18.0382 & 0.4238 & 0.2758 \\
        Tree pruning-fixed $\beta$ (Gaussian) & 18.6696 & 0.7992 & 0.2585  \\
        Tree pruning-automatic $\beta$ selection & 18.6577 & 0.7967 & 0.2589 \\
        Soft tree pruning-fixed $\beta$ (Laplace) & 18.4024 & 0.7715 & 0.2683  \\
        Soft tree pruning-automatic $\beta$ selection & 18.5162 & 0.7852 & 0.2634 \\
         Soft thresholding  & 18.4074 & 0.7704 & 0.2682 \\
        Hard thresholding  & 18.5728 & 0.7740 & 0.2617\\
        \hline
    \end{tabular}
\end{table}

 \newpage

\vspace{1cm}

\bibliographystyle{siam}
\bibliography{ref}
\end{document}